\newcommand{\fa}{\hbox{ for all }}
\newcommand{\be}{\beta,\varepsilon}
\newcommand{\R}{\mathbb{R}}
\newcommand{\N}{\mathbb{N}}
\newcommand{\calh}{\mathcal H}
\newcommand{\inner}[2]{\ifthenelse{\equal{#2}{}}{\left\langle\cdot,\cdot\right\rangle_{#1}}{\left\langle#2\right\rangle_{#1}}}
\newcommand{\norm}[2]{\ifthenelse{\equal{#2}{}}{\left\|\cdot\right\|_{#1}}{\left\|#2\right\|_{#1}}}
\newcommand{\seminorm}[2]{\ifthenelse{\equal{#2}{}}{\left|\cdot\right|_{#1}}{\left|#2\right|_{#1}}}
\DeclareMathOperator{\Null}{Null}
\DeclareMathOperator{\Sp}{span}
\DeclareMathOperator*{\argmax}{arg\max}
\newtheorem{theorem}{Theorem}
\newtheorem{prop}[theorem]{Proposition}
\newtheorem{lemma}[theorem]{Lemma}
\title{Equally spaced points are optimal for Brownian Bridge kernel interpolation}
\author[1]{Gabriele Santin
\thanks{gsantin@fbk.eu, 
\href{https://gabrielesantin.github.io/}{gabrielesantin.github.io/}}}
\affil[1]{Digital Society Center, Bruno Kessler Foundation (Trento, Italy)}
\date{\today}
\begin{document}

\maketitle

\begin{abstract}
In this paper we show how ideas from spline theory can be used to construct a local basis for the space of translates of a general iterated Brownian 
Bridge kernel $k_{\beta,\varepsilon}$ for $\beta\in\N$, $\varepsilon\geq 0$.
In the simple case $\beta=1$, we derive an explicit formula for the corresponding Lagrange 
basis, which allows us to solve interpolation problems without inverting any linear system. 

We use this basis to prove that interpolation with $k_{1,\varepsilon}$ is uniformly stable, i.e., the Lebesgue constant is bounded 
independently of the number an location of the interpolation points, and that equally spaced points are the unique minimizers of the associated 
power function, and are thus error optimal. 
In this derivation, we investigate the role of the shape parameter $\varepsilon>0$, and discuss its effect on these error and stability bounds.

Some of the ideas discussed in this paper could be extended to more general Green kernels.

\end{abstract}

\section{Introduction}
Positive definite kernels are used in a variety of settings and applications to solve various approximation problems \cite{Fasshauer2015,Wendland2005}. 
They are the basis of several algorithms, and their theory is well understood.

In the theory of Partial Differential Equations (PDE), one may end up studying positive definite kernels as Green kernels of certain differential 
operators. 
In recent years, a novel point of view has emerged (see \cite{Fasshauer2011a,Fasshauer2012a,Fasshauer2013}, and Chapter 6 in \cite{Fasshauer2015}), that 
allows one to follow the path in the opposite direction, i.e., designing PDEs starting from a given kernel.
This approach is particularly attractive to study the original interpolation problem because one may hope, to some extent, to derive properties of the kernel 
interpolant by looking solely at the associated differential operator (including its boundary conditions) and the domain of definition. 

We are interested in trying to connect this point of view to the problem of optimal sampling: Given the freedom to choose a number of
interpolation points, where should they be placed to minimize certain indicators? This question is of central importance in approximation, and we aim 
at understanding if the differential operator and the boundary conditions of a Green kernel may be used to infer the optimality of certain point distributions.

As an initial investigation in this direction, in this paper we consider the family of Brownian Bridge kernels $k_{1, \varepsilon}$, $\varepsilon\geq 0$ 
(Section \ref{sec:bb_basic}), which allow us to carry out explicit computations and 
derive formulas for some key quantities, including the Lagrange basis (Section \ref{sec:results}), the Lebesgue constant (Section \ref{sec:lebesgue}), 
and the power function (Section \ref{sec:power}). These are all very elementary computations, which permit 
nevertheless to spell out explicitly some interesting aspects. In particular, we prove that equally spaced points are error-optimal (Theorem \ref{thm:power}), 
and we provide an explicit rate of convergence of the interpolant in the flat limit $\varepsilon\to 0$ (Theorem \ref{thm:flat}).

This initial analysis exploits the connection with existing and well-established results in spline theory, which may be the ground for an 
extension of these result to more general kernels, at least in one dimension.

\section{The Brownian Bridge kernels}\label{sec:bb_basic}
We recall some facts on the Brownian Bridge kernels, and refer to \cite{CaFaMcC2014}, and Chapter 7 in \cite{Fasshauer2015}, for further details.
We denote as $D^j u(x) := u^{(j)}(x):=\partial_x^j u(x)$, $j\in \N_0$, the $j$-order weak derivative of $u$ w.r.t. $x$, and for $\beta\in\N$ we define the 
Sobolev space  
$H^\beta_0(0,1)$ with $u\in H^\beta_0(0,1)$ if and only if $D^j u\in L_2(0,1)$, $0\leq j\leq \beta$, and $D^j u_{|\{0,1\}}=0$, $0\leq j\leq \beta-1$.
Given $\beta\in\N$, $\varepsilon\geq 0$, $f\in L_2(0,1)$, the Brownian bridge kernel $k_{\varepsilon,\beta}:(0,1)^2\to\R$ is the Green kernel of the PDE
\begin{equation}\label{eq:PDE}
\begin{cases}
L_{\beta, \varepsilon} u(x) :=\left(-D^2 + \varepsilon D^0\right)^\beta u(x) &= f(x),\;\; x\in (0,1)\\
u^{(j)}(0)=u^{(j)}(1) &= 0, \;\; 0\leq j\leq \beta-1,
\end{cases}
\end{equation}
and in it is strictly positive definite on $(0,1)$.
Since $k_{\be}$ is the Green kernel of the PDE \eqref{eq:PDE}, setting $N_{\be}:=\Null(L_{\be})$ we have 
\begin{equation}\label{eq:green_kernel_properties}
k_{\beta, \varepsilon}(\cdot, y)\in H^\beta_0(0,1), \quad 
k_{\be}(\cdot, y)_{|(0,y)}\in N_{\be}, \quad
k_{\be}(\cdot, y)_{|(y,1)}\in N_{\be}
\fa y\in (0,1).
\end{equation}
Moreover, if $T_{\be}:=\left(\varepsilon D^0 + D^1\right)^\beta$ then for all $u\in H^1_0(0,1)$ and $f:=L_{\beta, \varepsilon} u\in L_2(0,1)$ we have that
\begin{equation*}
u(x) 
= \int_0^1 k_{\be}(x, y) f(y) dy 
= \int_0^1 T_{\be} k_{\be}(x, y) T_{\be} u(y) dy=:\inner{\be}{k_{\be}(\cdot, x),u}.
\end{equation*}
This means that $k_{\beta, \varepsilon}$ is a reproducing kernel on $H^\beta_0(0,1)$ with respect to the inner product $\inner{\be}{u,v}$, i.e., 
$(H^\beta_0(0,1), \inner{\be}{})$ is the unique Reproducing Kernel Hilbert Space (RKHS) or native space of $k_{\beta, \varepsilon}$ on $(0,1)$. 

It is known that $x\mapsto k_{\beta, 0}(x, y)$, $y\in (0,1)$, is a piecewise polynomial of degree $2\beta-1$ on $(0,y]$ and $[y, 1)$. 
Moreover, for any $\beta$ and $\varepsilon$ the kernels $k_{\beta, \varepsilon}$ can be represented in terms of their Mercer series, which is explicitly known 
and used in the RBF-QR 
or Hilbert-Schmidt SVD algorithms (see e.g. \cite{CaFaMcC2014}). 
For our purposes it suffices instead to recall that for $\beta=1,2$ there are closed-form expressions, 
and in particular
\begin{align}
k_{1, \varepsilon}(x,y) &= \begin{cases}
                              \min(x,y) - xy,& \varepsilon=0\\
                              \frac{\sinh(\varepsilon \min(x,y))\sinh(\varepsilon(1-\max(x,y)))}{\varepsilon\sinh(\varepsilon)},&\varepsilon>0
                              \end{cases}\label{eq:bb1_explicit},\;\;x,y\in(0,1).
\end{align}

\section{Interpolation and local bases}\label{sec:b_splines}
Given $N\in\N$ interpolation points $X:=\{x_1<x_2<\dots<x_N\}\subset (0,1)$, we set $x_0:=0$, $x_{N+1}:=1$. We further set $h_i:=x_{i+1}-x_i$, $0\leq i\leq N$.
It is known (see e.g. \cite{Fasshauer2015,Wendland2005}) that there exists a unique $k_{\be}$-interpolant $I_{\be,X} f$ of $f\in C(0,1)$ 
at $X$, 
with 
\begin{equation}\label{eq:cardinal_interp}
I_{\be,X} f(x) = \sum_{i=1}^N f(x_i) \ell_{\be, i}(x),
\end{equation}
and where $\{\ell_{\be, i}\}_{i=1}^N$ is the cardinal basis of 
$V_{\be}(X):=\Sp\{k_{\be}(\cdot, x): x\in X\}\subset \calh_{\be}$.
It follows that
$I_{\be,X} f\in V_{\be}(X)$, and using \eqref{eq:green_kernel_properties} we have also that
\begin{equation*}
V_{\be}(X)\subset S_{{\be}}(X):=\{f\in H^\beta_0(0,1): f_{|(x_i, x_{i+1})}\in N_{\be}, 0\leq i\leq N\}.
\end{equation*}
The elements of $S_{{\be}}(X)$ are called $L_{\be}$-splines (see \cite{Jerome1976,Jia1981,Seatzu1975}), and since this space is uniquely determined by 
the nodal values at $X$, it actually holds that $V_{\be}(X)= S_{{\be}}(X)$ by a dimension argument.

$L$-splines are well studied for a general differential operator $L$, and in particular there is a local basis of minimal support, named an 
$LB$-spline basis, which depends on $L$ and $X$ only (see \cite{Jerome1976,Jia1981,Seatzu1975} for different equivalent constructions). 
In our case $S_{\beta,0}(X)$ is the space of classical piecewise polynomial splines of degree $2\beta-1$, and the $LB$-splines are the usual 
$B$-splines. For $\varepsilon>0$ instead, following \cite{Seatzu1975} we consider the homogeneous linear problem 
$L_{\beta, \varepsilon} u= 0$, $\varepsilon>0$, which has a characteristic equation $(-z^{2} + \varepsilon^2)^\beta = 0$ with zeros $z_1 = \varepsilon$ and 
$z_2=-\varepsilon$, each with multiplicity $\beta$. It follows that a basis of $\Null(L_{\be})$
is given by the $2\beta$ linearly independent functions
\begin{equation*}
u_{j}(x):= x^{j-1} e^{\varepsilon x},\;\;u_{\beta+j}(x):= x^{j-1} e^{-\varepsilon x},\;\; 1\leq j\leq \beta.
\end{equation*}
With these we define $g(t, x):=\sum_{j=1}^{2 \beta} b_{j}(t) u_{j}(x)$, $t,x\in(0,1)$, with $b_{j}:(0,1)\to\R$, $1\leq j\leq 2\beta$ such that 
\begin{equation}\label{eq:g}
\left(\partial^i_x g(t,x)\right)_{|t=x} = 0, \;\;0\leq i\leq 2(\beta -1), \quad\left(\partial^{2\beta - 1}_x g(t,x)\right)_{|t=x} = (-1)^\beta,
\end{equation}
i.e., the vector $b(t):=(b_i(t))_{i=1}^{2\beta}$ solves the linear system $W(t) \cdot b(t) = e$ with $e:= (0,\dots, 0, (-1)^\beta)^T$,  where the matrix
$W(t):=\left(\partial_x^i u_j(t)\right)_{i,j=1}^{2\beta}\in\R^{2\beta\times 2\beta}$ is invertible since it is the Wronskian matrix of a set of 
linearly independent functions. From $g$ one can define the one-sided splines 
$g_+(t, x):= \chi_{(x, 1)}(t) g(t,x)$ and $g_-(t, x):= \chi_{(0, x)}(t) g(x,t)$, with $\chi_{(0, x)}$ the indicator function of $(0,x)$. 
The conditions \eqref{eq:g} and the definition of $g$ ensure that $g_+(x_i, \cdot), g_-(x_i, \cdot)\in S_{\be}(X)$ for all $x_i\in X$.

These one sided splines may be combined to obtained the $LB$-spline basis for $\varepsilon>0$ (see Theorem 3.1 in \cite{Seatzu1975}), so that each element of 
the basis has support in $2\beta$ consecutive intervals $[x_{i-1}, x_{i+2\beta-1}]$. This construction is not straightforward in general, but it is easier for 
$\beta=1$ and we spell out the details in the next section. 
Moreover, observe that for $\beta=1$ the support condition implies that the $i$-the element of the local basis is zero on $X\setminus\{x_i\}$, and it can thus 
be scaled to be $\ell_{\be, i}$. 

We will make use of the following basic identities, which are recalled for the reader's convenience.
\begin{lemma}\label{lemma:identities}
If $a,b,c,d\in\R$, then 
\begin{enumerate}[i)]
\item\label{item:first_identity} $\sinh(a) + \sinh(b) = 2\sinh((a+b)/2)\cosh((a-b)/2)$,
\item\label{item:fourth_identity} $\sinh(a-b)\sinh(c-d) - \sinh(a-c) \sinh(b-d) = \sinh(a-d)\sinh(c-b)$,
\item\label{item:nth_identity}  $\cosh(a)\sinh(b)-\sinh(a)\cosh(b) = \sinh(a-b)$.
\item\label{item:fifth_identity} $\sinh(a/2)^2/\sinh(a)=\tanh(a/2)/2$,
\end{enumerate}
\end{lemma}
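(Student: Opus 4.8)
The plan is to reduce every identity to the two basic addition theorems $\sinh(x\pm y) = \sinh x\cosh y \pm \cosh x\sinh y$ together with the product-to-sum rule $\sinh x\sinh y = \tfrac12\left(\cosh(x+y)-\cosh(x-y)\right)$, all of which follow at once from the exponential definitions of $\sinh$ and $\cosh$. Three of the four identities are then essentially one-line consequences of these formulas, and only one requires any real computation.

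For \ref{item:first_identity} I would substitute $x := (a+b)/2$ and $y := (a-b)/2$, so that $x+y = a$ and $x-y = b$; adding the addition formulas for $\sinh(x+y)$ and $\sinh(x-y)$ makes the two $\cosh x\sinh y$ terms cancel and leaves $2\sinh x\cosh y$, which is exactly the claim. Identity \ref{item:nth_identity} is the addition theorem for $\sinh$ itself, obtained by grouping the two mixed products on the left-hand side and reading off the difference of the arguments. For \ref{item:fifth_identity} I would use the double-angle formula $\sinh a = 2\sinh(a/2)\cosh(a/2)$ in the denominator, cancel one factor of $\sinh(a/2)$, and recognise the remaining quotient $\sinh(a/2)/(2\cosh(a/2))$ as $\tfrac12\tanh(a/2)$.

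The only identity needing a genuine, if short, calculation is \ref{item:fourth_identity}, so I would treat it as the main step. Here I would apply the product-to-sum rule to each of the three products. The left-hand side becomes $\tfrac12\left[\cosh(a-b+c-d)-\cosh(a-b-c+d)\right]-\tfrac12\left[\cosh(a+b-c-d)-\cosh(a-b-c+d)\right]$; the two $\cosh(a-b-c+d)$ terms cancel, leaving $\tfrac12\left[\cosh(a-b+c-d)-\cosh(a+b-c-d)\right]$. Expanding the right-hand side the same way gives $\tfrac12\left[\cosh(a+c-b-d)-\cosh(a+b-c-d)\right]$, and since $a-b+c-d=a+c-b-d$ the two expressions coincide. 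The only thing to watch is the bookkeeping of the six linear combinations of $a,b,c,d$ that appear as arguments, and in particular that the mixed cosine produced by the first product matches the one produced by the second so that it drops out; no hyperbolic estimate or inequality is ever required.
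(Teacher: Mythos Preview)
Your argument is correct. The paper itself does not supply a proof of this lemma at all; it simply states the four identities as standard facts ``recalled for the reader's convenience'', so there is no authorial proof to compare against. Your reductions via the addition theorems for $\sinh$ and the product-to-sum rule are exactly the textbook derivations, and your handling of identity~(ii) via cancellation of the $\cosh(a-b-c+d)$ terms is clean and correct.

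One small point worth flagging: identity~(iii) as printed in the paper carries a sign slip, since $\cosh(a)\sinh(b)-\sinh(a)\cosh(b)=\sinh(b-a)$ rather than $\sinh(a-b)$. Your phrase ``reading off the difference of the arguments'' glides past this without committing to which difference, so strictly speaking you have not verified the identity as stated. In the paper's sole use of this identity (the derivative computation in the proof of Theorem~\ref{thm:power}) it is the correct version $\sinh(b-a)$ that is actually applied, so the typo is harmless there; but if you were writing a self-contained proof of the lemma you should either correct the sign or note the discrepancy explicitly.
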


\subsection{The Lagrange basis for $\beta=1$}
For $\beta=1$ and $\varepsilon>0$ we have $u_{1}(x) = e^{\varepsilon x}$, $u_{2}(x) = e^{-\varepsilon x}$, so that
\begin{equation*}
W(t) b(t)= \begin{bmatrix}
    \phantom{\varepsilon} e^{\varepsilon t} &\phantom{-\varepsilon}e^{-\varepsilon t}\\ \varepsilon e^{\varepsilon t} & -\varepsilon e^{-\varepsilon t}
    \end{bmatrix}
    \begin{bmatrix}b_1(t)\\b_2(t)\end{bmatrix} 
    = \begin{bmatrix}\phantom{-}0\\-1\end{bmatrix}
    \Leftrightarrow 
    b(t) =\frac1{2\varepsilon} \begin{bmatrix}-e^{-\varepsilon t}\\ e^{\varepsilon t}\end{bmatrix},
\end{equation*}
and thus
\begin{equation}\label{eq:g_beta_1}
g(t, x) 
= \frac1{2\varepsilon}\left(-e^{-\varepsilon t} e^{\varepsilon x} + e^{\varepsilon t}e^{-\varepsilon x}\right)
= \frac1{2\varepsilon}\left(e^{\varepsilon (t-x)}-e^{-\varepsilon (t-x)}\right)
= \frac{\sinh\left(\varepsilon (t-x)\right)}{\varepsilon}.
\end{equation}
The $i$-th basis element is $\ell_{\be,i}(x) = c_1 g_-(x_{i-1}, x) + c_2 g_-(x_i, x)$, and to enforce the cardinal conditions we require
\begin{equation*}
\begin{bmatrix}
g_-(x_{i-1}, x_i) & 0\\
g_-(x_{i-1}, x_{i+1}) & g_-(x_{i}, x_{i+1})
\end{bmatrix}
\begin{bmatrix}
c_1\\c_2
\end{bmatrix}
=
\begin{bmatrix}
1\\0
\end{bmatrix},
\end{equation*}
which can be solved by forward substitution to get
\begin{equation*}
c_1=\frac{\varepsilon}{\sinh\left(\varepsilon (x_i-x_{i-1})\right)}
=\frac{\varepsilon}{\sinh\left(\varepsilon h_{i-1}\right)}, \;\;
c_2=-\frac{\varepsilon \sinh\left(\varepsilon (x_{i+1}-x_{i-1})\right)}{\sinh\left(\varepsilon 
(x_i-x_{i-1})\right)\sinh\left(\varepsilon (x_{i+1}-x_{i})\right)}
=-\frac{\varepsilon \sinh\left(\varepsilon (h_i+h_{i-1})\right)}{\sinh\left(\varepsilon 
h_{i-1}\right)\sinh\left(\varepsilon h_i\right)}.
\end{equation*}
Combing this with \eqref{eq:g_beta_1}, we obtain 
\begin{equation}\label{eq:leb_1}
\ell_{1,\varepsilon, i}(x)
=c_1 g_-(x_{i-1}, x) + c_2 g_-(x_i, x)
=\begin{cases}
\frac{\sinh(\varepsilon (x - x_{i-1}))}{\sinh(\varepsilon h_{i-1})},& x_{i-1}\leq x<x_i\\
\frac{\sinh(\varepsilon (x_{i+i}-x))}{\sinh(\varepsilon h_{i})},& x_{i}\leq x<x_{i+1},
\end{cases},\;\;1\leq i\leq N.
\end{equation}
where in the second case we used \eqref{item:fourth_identity} of Lemma \ref{lemma:identities}.

\section{Stability and accuracy}\label{sec:results}
We use this basis to study the interpolant for $\varepsilon>0$, and comment on the limiting case 
$\varepsilon\to0$ in Section \ref{sec:flat}.
The Lebesgue function and the squared power function are defined as
\begin{equation}\label{eq:power_function}
\Lambda_{\beta, \varepsilon,X}(x):= \sum_{i=1}^N \left|\ell_{\beta, \varepsilon, i}(x)\right|,
\quad\quad
P_{\beta, \varepsilon, X}(x)^2:= k_{\beta, \varepsilon}(x, x) - \sum_{i=1}^N \ell_{\be,i}(x) k_{\beta, \varepsilon}(x, x_i),
\quad\quad x\in (0,1).
\end{equation}
The Lebesgue function provides the stability bound
\begin{equation*}
\left|I_{\beta, \varepsilon,X}f(x)\right|\leq \Lambda_{\beta, \varepsilon,X}(x) \norm{\infty}{f_{|X}}, \;\;x\in (0,1),
\end{equation*}
and if additionally $f\in \calh_{\be}=H_0^\beta(0,1)$, then the power function gives the error bound
\begin{equation}\label{eq:pf_bound}
\left|f(x) - I_{\beta, \varepsilon,X}f(x)\right|\leq P_{\beta, \varepsilon,X}(x) \norm{\calh_{\be}}{f}, \;\;x\in (0,1).
\end{equation}
We derive explicit bounds for $\Lambda_{1, \varepsilon,X}$ and $P_{1, \varepsilon, X}$ using the expression of the cardinal 
functions derived above. 

\subsection{Lebesgue function}\label{sec:lebesgue}
We have the following explicit formula for the Lebesgue function.
\begin{prop}\label{prop:lebesgue}
Let $X\subset (0,1)$ and $\varepsilon>0$. Then 
\begin{equation}\label{eq:lebesgue_explicit}
\Lambda_{1, \varepsilon,X}(x)
= 
\begin{cases}
\frac{\sinh(\varepsilon x)}{\sinh(\varepsilon h_0)},&0< x\leq x_1\\
2\ \frac{\sinh(\varepsilon h_i/2)}{\sinh(\varepsilon h_i)} \cosh\left(\varepsilon 
\left(x-\frac{x_{i}+x_{i+1}}{2}\right)\right),&x_i\leq x \leq x_{i+1}, 1\leq i\leq N-1\\
\frac{\sinh(\varepsilon (1 - x))}{\sinh(\varepsilon h_N)},& x_N\leq x< 1.
\end{cases}
\end{equation}
\end{prop}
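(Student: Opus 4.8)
The plan is to exploit the explicit formula \eqref{eq:leb_1} for the cardinal functions together with their minimal-support property, which makes the sum defining $\Lambda_{1,\varepsilon,X}$ collapse to at most two terms on each subinterval. First I would record two elementary observations. The support property from Section \ref{sec:b_splines} gives that $\ell_{1,\varepsilon,i}$ vanishes outside $[x_{i-1}, x_{i+1}]$, so on a fixed interval $[x_i, x_{i+1}]$ with $1\leq i\leq N-1$ the only cardinal functions that can be nonzero are $\ell_{1,\varepsilon,i}$ (evaluated on the right branch of its support) and $\ell_{1,\varepsilon,i+1}$ (evaluated on the left branch of its support). Moreover, since $\varepsilon>0$ and the arguments $x-x_{i-1}$, $x_{i+1}-x$ are nonnegative on the relevant pieces, each $\ell_{1,\varepsilon,i}(x)\geq 0$, so the absolute values in \eqref{eq:power_function} may be dropped throughout.

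With these reductions, for $x\in[x_i, x_{i+1}]$ I would simply add the two surviving terms using \eqref{eq:leb_1}, obtaining
\[
\Lambda_{1,\varepsilon,X}(x) = \frac{\sinh(\varepsilon(x_{i+1}-x)) + \sinh(\varepsilon(x-x_i))}{\sinh(\varepsilon h_i)}.
\]
The numerator is then simplified by the sum-to-product identity \eqref{item:first_identity} of Lemma \ref{lemma:identities} with $a=\varepsilon(x_{i+1}-x)$ and $b=\varepsilon(x-x_i)$: the half-sum is $\varepsilon h_i/2$ and the half-difference is $\varepsilon\left(\frac{x_i+x_{i+1}}{2}-x\right)$, and using that $\cosh$ is even yields exactly the middle case of \eqref{eq:lebesgue_explicit}.

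The two boundary cases I would treat separately, the only subtlety being the conventions $x_0=0$ and $x_{N+1}=1$. On $(0,x_1]$ the support window shows that $\ell_{1,\varepsilon,1}$ is the unique nonzero cardinal function, and its left-branch expression in \eqref{eq:leb_1} reduces to $\sinh(\varepsilon x)/\sinh(\varepsilon h_0)$ since $x_0=0$; symmetrically, on $[x_N,1)$ only $\ell_{1,\varepsilon,N}$ survives, giving $\sinh(\varepsilon(1-x))/\sinh(\varepsilon h_N)$. This exhausts all three cases of \eqref{eq:lebesgue_explicit}.

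I do not expect a genuine obstacle here: the argument is essentially bookkeeping. The one point requiring care is verifying that no third cardinal function contributes on an interior interval, i.e.\ that the supports of $\ell_{1,\varepsilon,i-1}$ and $\ell_{1,\varepsilon,i+2}$ do not reach into $[x_i,x_{i+1}]$, which follows immediately from the $[x_{j-1},x_{j+1}]$ support window. Establishing nonnegativity at the outset is what removes the absolute values and lets the computation proceed without any case distinction on signs.
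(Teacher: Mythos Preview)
Your proposal is correct and follows essentially the same route as the paper: reduce the Lebesgue sum to at most two nonnegative terms per subinterval via the support of the cardinal functions from \eqref{eq:leb_1}, handle the boundary intervals as single terms, and on interior intervals apply identity \eqref{item:first_identity} of Lemma~\ref{lemma:identities} (together with the evenness of $\cosh$) to obtain the middle case. Your write-up is slightly more explicit than the paper's about the support and nonnegativity justifications, but the argument is the same.
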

\begin{proof}
Using the formula \eqref{eq:leb_1}, one gets for $x\in(0,x_1]$ or $x\in [x_N,1)$ that
\begin{equation*}
\Lambda_{1, \varepsilon,X}(x)
= \left|\ell_{1, \varepsilon, 1}(x)\right|=\frac{\sinh(\varepsilon x)}{\sinh(\varepsilon h_0)}, x\in(0,x_1],
\quad
\Lambda_{1, \varepsilon,X}(x)
=\left|\ell_{1, \varepsilon, N}(x)\right|=\frac{\sinh(\varepsilon (1-x))}{\sinh(\varepsilon h_{N})}, x\in[x_N, 1).
\end{equation*} 
If instead $x_i< x< x_{i+1}$ with $i\in{1\leq i\leq N-1}$, we use the fact that $\sinh$ is an odd function, and the formula 
\eqref{item:first_identity} in Lemma \ref{lemma:identities} to derive that
\begin{equation*}
\Lambda_{1, \varepsilon,X}(x)
=2\ \frac{\sinh(\varepsilon h_i/2)}{\sinh(\varepsilon h_i)} \cosh\left(\varepsilon \left(x-\frac{x_{i}+x_{i+1}}{2}\right)\right),
\end{equation*}
which is the desired expression.
\end{proof}
With this result, we can easily conclude that $\norm{L_\infty(0,1)}{\Lambda_{1, \varepsilon,X}}= 1$ for any of $X$ and $\varepsilon>0$, and 
thus interpolation with $k_{1,\varepsilon}$ is stable for all $\varepsilon>0$ and for any number of arbitrarily located interpolation points. 
\begin{theorem}\label{thm:lebesgue}
For any $X\subset (0,1)$ and $\varepsilon>0$ we have $\Lambda_{1, \varepsilon,X}(x)< 1$ for $x\in (0,1)\setminus X$, $\Lambda_{1, \varepsilon,X}(x) = 1$ 
for $x\in X$, and $\lim_{x\to0}\Lambda_{1, \varepsilon,X}(x)=\lim_{x\to 1}\Lambda_{1, \varepsilon,X}(1)=0$.
\end{theorem}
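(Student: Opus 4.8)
The plan is to read everything off the explicit formula \eqref{eq:lebesgue_explicit} from Proposition \ref{prop:lebesgue}, handling the three pieces (the two boundary intervals and the interior intervals) separately, and to invoke only the strict monotonicity of $\sinh$ and $\cosh$ on $[0,\infty)$ together with the double-angle identity for $\sinh$. I would first dispatch the easy claims. For the limits, on $(0,x_1]$ the function equals $\sinh(\varepsilon x)/\sinh(\varepsilon h_0)$, which tends to $0$ as $x\to0$ since $\sinh(0)=0$, and symmetrically $\sinh(\varepsilon(1-x))/\sinh(\varepsilon h_N)\to 0$ as $x\to1$ on $[x_N,1)$. For the nodal values, the cleanest route is that $\{\ell_{1,\varepsilon,i}\}$ is the cardinal basis, so $\ell_{1,\varepsilon,i}(x_j)=\delta_{ij}$ and only one summand survives in the definition of $\Lambda_{1,\varepsilon,X}$, giving $\Lambda_{1,\varepsilon,X}(x_j)=1$; alternatively one checks this from \eqref{eq:lebesgue_explicit} using $x_1=h_0$ and the fact that $x-(x_i+x_{i+1})/2=-h_i/2$ at $x=x_i$.

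The substance of the statement is the strict inequality on $(0,1)\setminus X$. On the two boundary intervals it is immediate: for $0<x<x_1=h_0$, strict monotonicity of $\sinh$ gives $\sinh(\varepsilon x)<\sinh(\varepsilon h_0)$, hence $\Lambda_{1,\varepsilon,X}(x)<1$, and the interval $(x_N,1)$ is handled identically using $1-x<h_N$. For an interior interval $x_i<x<x_{i+1}$ the decisive simplification is to absorb the constant prefactor via the double-angle identity $\sinh(\varepsilon h_i)=2\sinh(\varepsilon h_i/2)\cosh(\varepsilon h_i/2)$ (which also follows from item \ref{item:fifth_identity} of Lemma \ref{lemma:identities}), reducing the middle case of \eqref{eq:lebesgue_explicit} to the clean ratio
\begin{equation*}
\Lambda_{1,\varepsilon,X}(x)=\frac{\cosh\!\left(\varepsilon\bigl(x-\tfrac{x_i+x_{i+1}}{2}\bigr)\right)}{\cosh(\varepsilon h_i/2)}.
\end{equation*}
Since $\cosh$ is even and strictly increasing on $[0,\infty)$, and since $\bigl|x-\tfrac{x_i+x_{i+1}}{2}\bigr|<h_i/2$ for every $x$ strictly inside the interval, the numerator is strictly smaller than the denominator, yielding $\Lambda_{1,\varepsilon,X}(x)<1$.

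The calculations are elementary, so I do not expect a genuine obstacle; the only point demanding care is strictness. On each closed subinterval one has $\Lambda_{1,\varepsilon,X}(x)\le 1$, and equality is attained precisely when $\bigl|x-\tfrac{x_i+x_{i+1}}{2}\bigr|=h_i/2$, i.e.\ exactly at the endpoints $x_i,x_{i+1}\in X$ (and likewise $x=x_1$, $x=x_N$ on the boundary pieces). I would therefore stress that on every \emph{open} subinterval the relevant $\sinh$- or $\cosh$-inequality is strict, so the value $1$ is reached only on $X$. This simultaneously recovers $\norm{L_\infty(0,1)}{\Lambda_{1,\varepsilon,X}}=1$ as asserted before the theorem, since the supremum $1$ is achieved at the interpolation nodes but nowhere else.
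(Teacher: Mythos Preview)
Your proof is correct and follows essentially the same approach as the paper: both read the result off the explicit formula \eqref{eq:lebesgue_explicit}, treating the boundary intervals via the strict monotonicity of $\sinh$ and the interior intervals via the evenness and monotonicity of $\cosh$. Your additional step of absorbing the prefactor with the double-angle identity $\sinh(\varepsilon h_i)=2\sinh(\varepsilon h_i/2)\cosh(\varepsilon h_i/2)$ makes the strict inequality on the interior intervals slightly more transparent than the paper's ``maxima at the endpoints, where the value is $1$'' phrasing, but this is a cosmetic refinement rather than a different argument.
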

\begin{proof}
The fact that $\Lambda_{1, \varepsilon,X}=1$ on the interpolation points is well known and follows from the definition.
If $x\in(0,x_1]\cup [x_N,1)$, from equation \eqref{eq:lebesgue_explicit} we have $\Lambda_{1, 
\varepsilon,X}(x)\leq 1$ since $\sinh$ is increasing. The same formula can be used to verify the value of the limits as $x\to 0, 1$.
For $x\in[x_i,x_{i+1}]$ with $1\leq i\leq N-1$ we use the fact that $\cosh$ is an even function with a unique global minimum at $0$, so that 
\eqref{eq:lebesgue_explicit} shows that in $[x_i, x_{i+1}]$ the function $\Lambda_{1, 
\varepsilon,X}(x)$ has maxima in $x_i$ and $x_{i+1}$, where its value is $1$.
\end{proof}

\subsection{Power function}\label{sec:power}
We proceed similarly to derive and explicit formula for the power function.
\begin{prop}\label{prop:power}
Let $X\subset (0,1)$ and $\varepsilon>0$. Let $x_0:=0$, $x_{N+1}:=1$, and define
\begin{equation*}
x^+:=\argmax\{x_i: x_i\leq x, 0\leq i\leq N\},
\;\;
x^+:=\argmax\{x_i: x\leq x_i, 1\leq i\leq N+1\}.
\end{equation*}
Then 
\begin{equation}\label{eq:power_explicit}
P_{1, \varepsilon,X}(x)
=\sqrt{\frac{\sinh(\varepsilon(x-x^-)\sinh(\varepsilon(x^+-x))}{\varepsilon\sinh(\varepsilon (x^+ - x^-))}}, \;\;x\in(0,1).
\end{equation}
\end{prop}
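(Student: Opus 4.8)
The plan is to exploit the locality of the Lagrange basis \eqref{eq:leb_1}. Each $\ell_{1,\varepsilon,i}$ is supported on $[x_{i-1},x_{i+1}]$, so on a fixed interval $[x_i,x_{i+1}]$ only $\ell_{1,\varepsilon,i}$ and $\ell_{1,\varepsilon,i+1}$ are nonzero. Fixing $x\in(0,1)$ and writing $x^-,x^+$ for the two bracketing nodes among $\{x_0=0,x_1,\dots,x_N,x_{N+1}=1\}$, the sum defining $P_{1,\varepsilon,X}(x)^2$ in \eqref{eq:power_function} thus collapses to (at most) the two terms attached to $x^-$ and $x^+$. No separate argument is needed for the boundary intervals $[0,x_1]$ and $[x_N,1]$: by \eqref{eq:bb1_explicit} we have $k_{1,\varepsilon}(x,0)=k_{1,\varepsilon}(x,1)=0$ (since $\sinh(0)=0$), so the term associated with an endpoint node vanishes and the computation below applies verbatim with $x^-=0$ or $x^+=1$.

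First I would substitute the explicit expressions. On $[x_i,x_{i+1}]$ formula \eqref{eq:leb_1} gives $\ell_{1,\varepsilon,i}(x)=\sinh(\varepsilon(x_{i+1}-x))/\sinh(\varepsilon h_i)$ and $\ell_{1,\varepsilon,i+1}(x)=\sinh(\varepsilon(x-x_i))/\sinh(\varepsilon h_i)$, while \eqref{eq:bb1_explicit} supplies $k_{1,\varepsilon}(x,x)$, $k_{1,\varepsilon}(x,x_i)$ and $k_{1,\varepsilon}(x,x_{i+1})$ (the ordering $x_i\le x\le x_{i+1}$ fixing which argument realizes the min and which the max in each). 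Pulling out the common factor $1/(\varepsilon\sinh(\varepsilon))$ and clearing $\sinh(\varepsilon h_i)$, the claimed formula \eqref{eq:power_explicit} becomes equivalent to the single hyperbolic identity, written with the abbreviations $u=\varepsilon x$, $v=\varepsilon x_i$, $w=\varepsilon x_{i+1}$ (so that $\varepsilon h_i=w-v$),
\begin{multline*}
\sinh(u)\sinh(\varepsilon-u)\sinh(w-v)-\sinh(\varepsilon-u)\sinh(w-u)\sinh(v)\\
-\sinh(u)\sinh(u-v)\sinh(\varepsilon-w)=\sinh(\varepsilon)\sinh(u-v)\sinh(w-u).
\end{multline*}

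To establish this identity I would proceed by two telescoping steps using item \eqref{item:fourth_identity} of Lemma \ref{lemma:identities}. Grouping the first two terms, which share the factor $\sinh(\varepsilon-u)$, the choice $(a,b,c,d)=(w,v,u,0)$ collapses $\sinh(u)\sinh(w-v)-\sinh(w-u)\sinh(v)$ to $\sinh(w)\sinh(u-v)$. This produces a common factor $\sinh(u-v)$ across the two remaining terms; factoring it out and applying item \eqref{item:fourth_identity} once more with $(a,b,c,d)=(\varepsilon,u,w,0)$ reduces $\sinh(\varepsilon-u)\sinh(w)-\sinh(u)\sinh(\varepsilon-w)$ to $\sinh(\varepsilon)\sinh(w-u)$, yielding the right-hand side. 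Dividing back by $\varepsilon\sinh(\varepsilon)\sinh(\varepsilon h_i)$, taking the square root, and recognizing $u-v=\varepsilon(x-x^-)$, $w-u=\varepsilon(x^+-x)$, $w-v=\varepsilon(x^+-x^-)$ then gives exactly \eqref{eq:power_explicit}.

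The genuinely routine part is the substitution and the cancellation of the constant $\sinh(\varepsilon)$; the only place demanding care is the bookkeeping in the two applications of item \eqref{item:fourth_identity}, where one must select the quadruple $(a,b,c,d)$ so that the left-hand pair of products matches the grouped terms. The main conceptual obstacle is recognizing the two-stage group-and-factor pattern that reduces three $\sinh$ factors at a time, rather than attempting to expand everything at once via product-to-sum formulas.
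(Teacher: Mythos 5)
Your proposal is correct and follows essentially the same route as the paper: reduce to the two bracketing nodes by locality, handle the boundary intervals via $k_{1,\varepsilon}(x,0)=k_{1,\varepsilon}(x,1)=0$, and apply item \eqref{item:fourth_identity} of Lemma \ref{lemma:identities} twice, first to the pair sharing the factor $\sinh(\varepsilon(1-x))$ and then to the remaining pair sharing $\sinh(\varepsilon(x-x_i))$. The only difference is presentational — you isolate the computation as a single three-term hyperbolic identity and telescope it, whereas the paper interleaves the two applications with the substitution — and both the identity and your choices of $(a,b,c,d)$ check out.
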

\begin{proof}
To simplify the proof, we observe that \eqref{eq:leb_1} can be used to define $\ell_{1,\varepsilon,0}(x)$ for $x\in(x_0,x_1]$, and 
$\ell_{1,\varepsilon,N+1}(x)$ for $x\in(x_N,x_{N+1}]$. With these definitions, it follows from \eqref{eq:power_function} that if 
$x\in(x_i,x_{i+1})$ with $0\leq i\leq N$ we have
\begin{equation}\label{eq:power_tmp3}
P_{1,\varepsilon,X}(x)^2
=k_{\beta, \varepsilon}(x, x) - \ell_i(x) k_{\beta, \varepsilon}(x, x_i)-\ell_{i+1}(x) k_{\beta, \varepsilon}(x, x_{i+1}),
\end{equation}
where the formula is valid also for $x\in(x_0,x_1]\cup [x_N,x_{N+1})$, since $k_{\beta, \varepsilon}(x, 
x_{0})=k_{\beta, \varepsilon}(x,1)=0$ and thus in this case only two terms in the right hand side are nonzero.

We start by using \eqref{eq:bb1_explicit} and 
\eqref{eq:leb_1} to compute
\begin{align}\label{eq:power_tmp1}
k_{\beta, \varepsilon}(x, x)&-\ell_i(x) k_{\beta, \varepsilon}(x, x_i)
=\frac{\sinh(\varepsilon x)\sinh(\varepsilon(1-x))}{\varepsilon\sinh(\varepsilon)} 
- \frac{\sinh(\varepsilon (x_{i+1}-x))}{\sinh(\varepsilon h_i)}\frac{\sinh(\varepsilon 
x_i)\sinh(\varepsilon(1-x))}{\varepsilon\sinh(\varepsilon)}\nonumber\\
&=\frac{\sinh(\varepsilon(1-x))}{\varepsilon\sinh(\varepsilon)\sinh(\varepsilon h_i)}
\sinh(\varepsilon x_{i+1})\sinh(\varepsilon(x-x_i)),
\end{align}
where we used point \eqref{item:fourth_identity} of Lemma \ref{lemma:identities} in the last step.
Inserting \eqref{eq:power_tmp1} into \eqref{eq:power_tmp3}, we proceed to subtract the second term so that 
\begin{align*}
P_{1,\varepsilon,X}(x)^2 
&=\frac{\sinh(\varepsilon(1-x))\sinh(\varepsilon x_{i+1})\sinh(\varepsilon(x-x_i))}{\varepsilon\sinh(\varepsilon)\sinh(\varepsilon h_i)}
- \ell_{i+1}(x) k_{\beta, \varepsilon}(x, x_{i+1})\\
&=
\frac{\sinh(\varepsilon(x-x_i)}{\varepsilon\sinh(\varepsilon)\sinh(\varepsilon h_i)}
\left(\sinh(\varepsilon(1-x))\sinh(\varepsilon x_{i+1})- \sinh(\varepsilon x)\sinh(\varepsilon(1-x_{i+1}))\right)\\
&=
\frac{\sinh(\varepsilon(x-x_i)}{\varepsilon\sinh(\varepsilon)\sinh(\varepsilon h_i)}
\sinh(\varepsilon) \sinh(\varepsilon(x_{i+1}-x))
=
\frac{\sinh(\varepsilon(x-x_i)\sinh(\varepsilon(x_{i+1}-x))}{\varepsilon\sinh(\varepsilon h_i)},
\end{align*}
where we used again \eqref{item:fourth_identity} of Lemma \ref{lemma:identities}.
\end{proof}

We now use this formula to derive rates of convergence in terms of the fill distance
\begin{equation*}
h_X:=\max_{x\in(0,1)} \min_{0\leq i\leq N+1} \|x-x_i\|=\frac12\max\limits_{0\leq i\leq N} h_i,
\end{equation*}
which is commonly used in kernel interpolation, except that also the boundary points are included in the definition to account for the zero boundary conditions.

\begin{theorem}\label{thm:power}
For any $X\subset (0,1)$ and $\varepsilon>0$ we have 
\begin{equation}\label{eq:power_bound}
\norm{L_\infty(0,1)}{P_{1,\varepsilon,X}} 
= \max\limits_{0\leq i\leq N} \sqrt{\frac{\tanh(\varepsilon h_i/2)}{2 \varepsilon}}
= \sqrt{\frac{\tanh(\varepsilon h_X)}{2 \varepsilon}}
=\frac{1}{\sqrt{2}} h_X^{1/2}+ \mathcal O(\varepsilon h_X^{3/2}),
\end{equation}
where the $\mathcal O$ term contains exact orders in terms of both $\varepsilon$ and $h_X$.

In particular, for each $N\in\N$ the set $\overline X:=\left\{x_i:=i/(N+1)\right\}_{i=1}^N$ of equally spaces points is the unique mimimizer of
$\norm{L_\infty(0,1)}{P_{1,\varepsilon,X}}$, with a value
\begin{equation}\label{eq:power_equi_bound}
\norm{L_\infty(0,1)}{P_{1,\varepsilon,\overline X}} 
=\sqrt{\frac{\tanh(\varepsilon/(2(N+1))}{2 \varepsilon}}=\frac{1}{2}(N+1)^{-1/2} + \mathcal O(\varepsilon (N+1)^{-3/2}).
\end{equation}
\end{theorem}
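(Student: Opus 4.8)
The plan is to leverage the closed form from Proposition~\ref{prop:power} to turn the $L_\infty$ optimization into a one-variable maximization on each subinterval, and then to settle the optimal-placement claim by an elementary averaging argument.

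First I would use \eqref{eq:power_explicit}: on each $(x_i,x_{i+1})$ one has $x^-=x_i$ and $x^+=x_{i+1}$, so that $P_{1,\varepsilon,X}(x)^2 = \sinh(\varepsilon(x-x_i))\sinh(\varepsilon(x_{i+1}-x))/(\varepsilon\sinh(\varepsilon h_i))$, which vanishes at both endpoints. Hence $\norm{L_\infty(0,1)}{P_{1,\varepsilon,X}}$ is the maximum over $0\le i\le N$ of the interior maxima on the individual intervals. Writing $t:=x-x_i\in[0,h_i]$, I would maximize $\sinh(\varepsilon t)\sinh(\varepsilon(h_i-t))$ via the product-to-sum identity $\sinh(a)\sinh(b)=\tfrac12(\cosh(a+b)-\cosh(a-b))$, which recasts it as $\tfrac12(\cosh(\varepsilon h_i)-\cosh(\varepsilon(2t-h_i)))$. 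Since $\cosh$ is even with a strict global minimum at $0$, the unique maximizer is the midpoint $t=h_i/2$, with value $\tfrac12(\cosh(\varepsilon h_i)-1)=\sinh(\varepsilon h_i/2)^2$.

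Second, dividing by $\varepsilon\sinh(\varepsilon h_i)$ and applying the identity $\sinh(a/2)^2/\sinh(a)=\tanh(a/2)/2$ of Lemma~\ref{lemma:identities} with $a=\varepsilon h_i$ yields the interval maximum $\tanh(\varepsilon h_i/2)/(2\varepsilon)$, so that $\norm{L_\infty(0,1)}{P_{1,\varepsilon,X}}=\max_i\sqrt{\tanh(\varepsilon h_i/2)/(2\varepsilon)}$. As $\tanh$ is strictly increasing, the maximum is attained at the largest gap; substituting $\max_i h_i=2h_X$ from the definition of the fill distance gives the second equality $\sqrt{\tanh(\varepsilon h_X)/(2\varepsilon)}$. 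The flat-limit expansion then follows by inserting the series $\tanh(u)=u-u^3/3+\mathcal O(u^5)$ at $u=\varepsilon h_X$ and expanding the square root, which reproduces the leading term $\tfrac1{\sqrt2}h_X^{1/2}$ together with its explicit next-order correction.

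Third, for optimality I would note that $s\mapsto\sqrt{\tanh(\varepsilon s)/(2\varepsilon)}$ is strictly increasing, so minimizing the power-function norm over all $N$-point sets is equivalent to minimizing $h_X$, i.e. minimizing $\max_{0\le i\le N}h_i$. The gaps are $N+1$ positive numbers with $\sum_{i=0}^N h_i=1$, whence $\max_i h_i\ge 1/(N+1)$ with equality \emph{if and only if} all gaps coincide; this identifies the unique minimizer $h_i\equiv 1/(N+1)$, that is $\overline X=\{i/(N+1)\}_{i=1}^N$. Plugging $h_X=1/(2(N+1))$ into the explicit formula gives \eqref{eq:power_equi_bound} and its expansion. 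The computation is elementary throughout; the only points requiring care are verifying that the interval maximum is attained at the interior midpoint rather than at a node (made transparent by the product-to-sum rewriting), correctly carrying the factor $2$ between $\max_i h_i$ and the fill distance $h_X$, and reading off the exact powers of $\varepsilon$ and $h_X$ in the remainder directly from the $\tanh$ series.
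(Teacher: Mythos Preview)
Your proof is correct and follows essentially the same route as the paper: use the closed form from Proposition~\ref{prop:power}, locate the interior maximum of $P_{1,\varepsilon,X}^2$ on each subinterval at the midpoint, simplify via Lemma~\ref{lemma:identities}\eqref{item:fifth_identity}, pass to $h_X$ by monotonicity of $\tanh$, expand via the $\tanh$ series, and then minimize $h_X$ over $N$-point sets. The only notable difference is in how you locate the midpoint maximizer: the paper differentiates $P_{1,\varepsilon,X}^2$ and invokes Lemma~\ref{lemma:identities}\eqref{item:nth_identity} to collapse the derivative to $\sinh(\varepsilon(x^++x^--2x))/(\varepsilon\sinh(\varepsilon(x^+-x^-)))$, whereas you bypass calculus entirely via the product-to-sum identity $\sinh a\sinh b=\tfrac12(\cosh(a+b)-\cosh(a-b))$, which makes the midpoint optimality immediate from the evenness and minimum of $\cosh$. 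Your averaging argument for the uniqueness of equally spaced points is also slightly more explicit than the paper's.
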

\begin{proof}
If $x\in X$ then $P_{1,\varepsilon,X}(x)=0$ by definition. 
Given instead $x\in (0,1)\setminus X$, we have from \eqref{eq:power_explicit}, and using \eqref{item:nth_identity} of Lemma \ref{lemma:identities}, that 
\begin{equation*}
\partial_x P_{1,\varepsilon,X}(x)^2 
=\frac{\cosh(\varepsilon(x-x^-)\sinh(\varepsilon(x^+-x))-\sinh(\varepsilon(x-x^-)\cosh(\varepsilon(x^+-x))}{\varepsilon\sinh(\varepsilon (x^+ - x^-))}
= \frac{\sinh(\varepsilon(x^+ + x^- - 2x)}{\varepsilon\sinh(\varepsilon (x^+ - x^-))},
\end{equation*}
which is positive if $x< x^*:=(x^-+x^+)/2$, zero for $x= x^*$, and negative for $x>x^*$. It follows that $P_{1,\varepsilon,X}(x)$ has in 
$[x^-, x^+]$ a unique maximum in $x^*$, where it takes the value 
\begin{equation*}
P_{1,\varepsilon,X}(x^*)
=\left(\frac{\sinh(\varepsilon(x^+-x^-)/2)^2}{\varepsilon\sinh(\varepsilon (x^+ - x^-))}\right)^{1/2}
=\sqrt{\frac{\tanh(\varepsilon(x^+-x^-)/2)}{2 \varepsilon}},
\end{equation*}
where we used \eqref{item:fifth_identity} of Lemma \ref{lemma:identities}. 
The first two identities in \ref{eq:power_bound} then follow by taking the supremum over $x\in(0,1)$, and using the definition of $h_X$. The asymptotic rate 
follows instead from the Taylor approximation $\tanh(x)=x - x^3/3 + \mathcal O(x^5)$ and the fact that $\sqrt{a + b} \leq \sqrt{a} + \sqrt{b}$, $a,b>0$.
Finally, the term $\sqrt{\tanh(\varepsilon h_X/2)/(2 \varepsilon)}$ is increasing in $h_X$, and thus for a given $N$ it is minimized when $h_X$ takes its 
minimal value, which is attained by $N$ equally spaced points. Inserting the corresponding value $h_X = 1/(2(N+1))$ in \eqref{eq:power_bound} 
gives 
\eqref{eq:power_equi_bound}.
\end{proof}

Figure \eqref{fig:pf_viz} shows the value of $P_{1,\varepsilon,X}(x)$ for $x\in[1/3,2/3]$ and $\varepsilon=0,1,10,100$. In Figure~\ref{fig:pf_rates} we report 
instead the rate of decay of $\|P_{1,\varepsilon,\overline X}\|_{L_\infty}$ for the same $\varepsilon$ and for sets $\overline X$ of $N=1, 2,\dots, 10^3$ 
equally 
spaced points.
Observe that the power function is strictly decreasing with $\varepsilon$, and in particular for $\varepsilon'\geq \varepsilon > 0$ 
equation \eqref{eq:power_bound} 
gives that $\norm{L_\infty(0,1)}{P_{1,\varepsilon,X}} \leq c \sqrt{\varepsilon'/\varepsilon}\norm{L_\infty(0,1)}{P_{1,\varepsilon',X}}$. However, it can be 
proven (see Chapter 7 in \cite{Fasshauer2015}) that 
$\norm{1,\varepsilon}{u}^2
=\seminorm{H^1}{v}^2 + \varepsilon^2 \norm{L_2}{u}^2$,
and in particular for all $\varepsilon'\geq \varepsilon\geq 0$ there is a norm equivalence
$
\norm{\calh_{1, \varepsilon'}}{f}
\leq \norm{\calh_{1, \varepsilon'}}{f}
\leq \sqrt{\varepsilon'/\varepsilon} \norm{\calh_{1, \varepsilon}}{f}
$ for all $f\in H_0^1(0,1)$. In other words, the two terms in the right hand side of \eqref{eq:pf_bound} scale linearly with $\varepsilon$, bu in opposite 
directions.
\begin{figure}
 \centering
\begin{subfigure}{.45\textwidth}
\includegraphics[width=\textwidth]{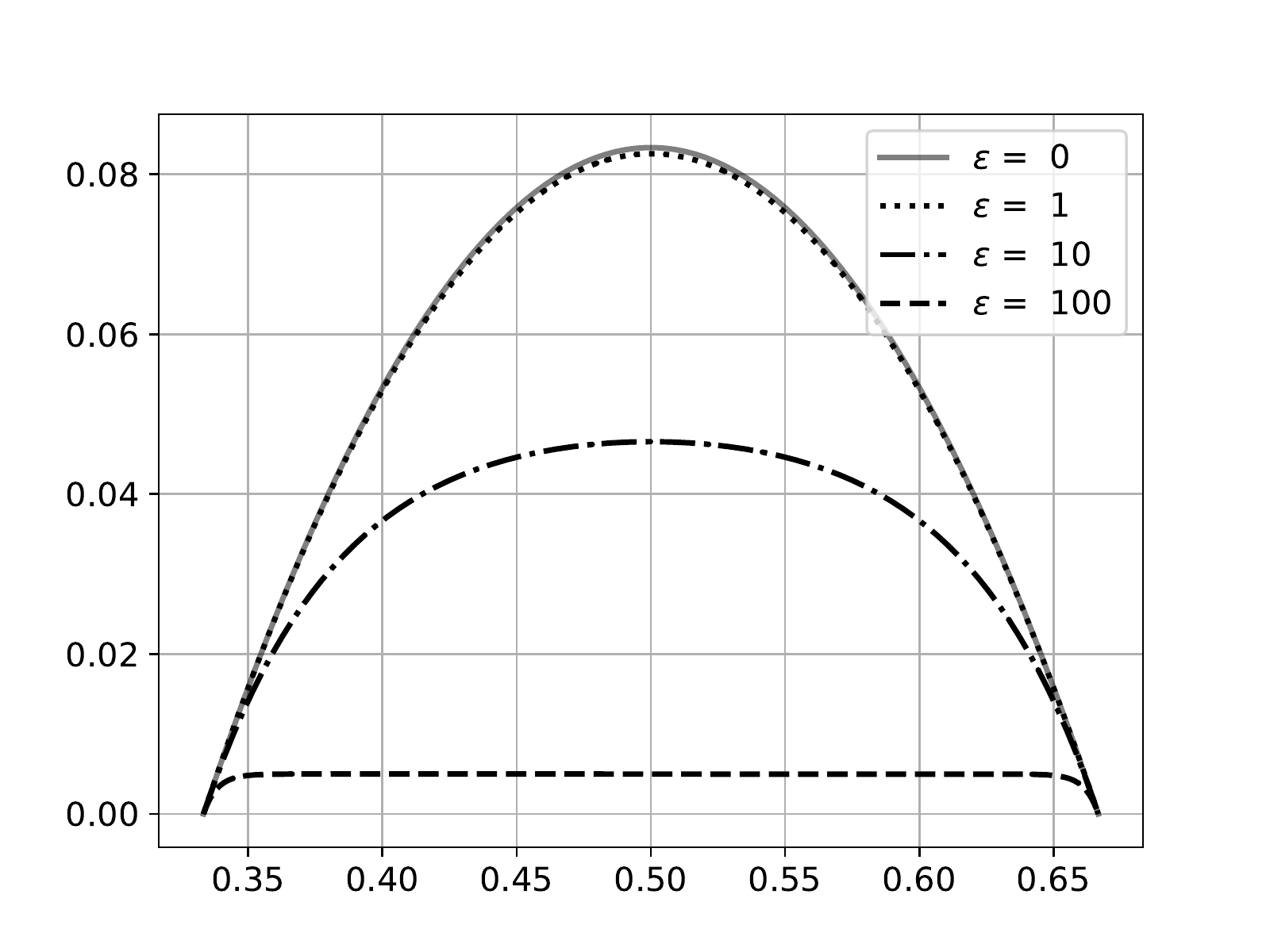}
\caption{}\label{fig:pf_viz}
\end{subfigure}
\begin{subfigure}{.45\textwidth}
\includegraphics[width=\textwidth]{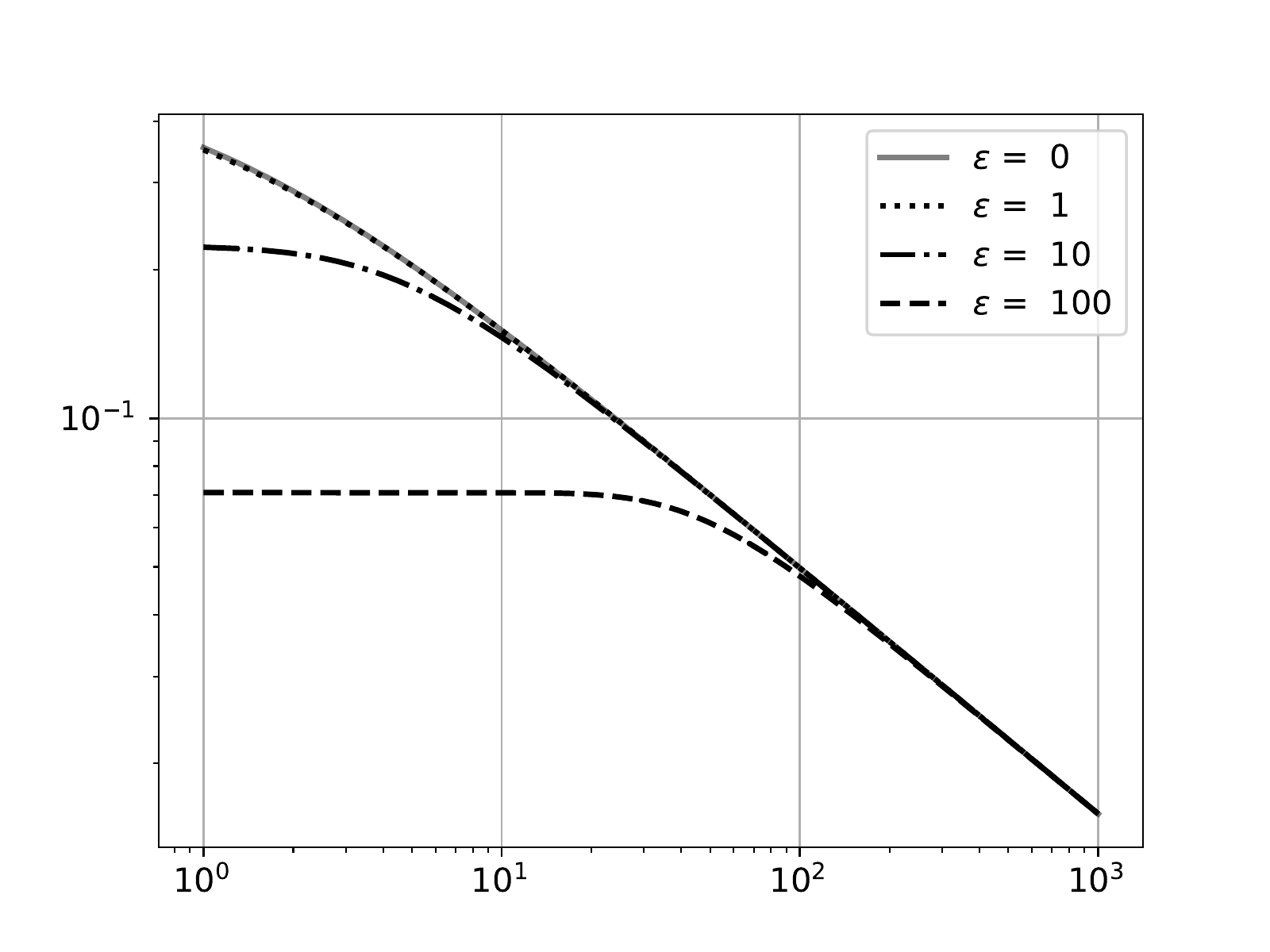}
\caption{}\label{fig:pf_rates}
\end{subfigure}

\caption{Power functions for $k_{1,\varepsilon}$ with $\varepsilon=0,1,10,100$: Visualization of the values of the power function in an example interval  
$x_i=1/3$, $x_{i+1}=2/3$ (Figure \ref{fig:pf_viz}), and decay of the maximum in $[0,1]$ of the power function corresponding to $N$ equally spaced points, for 
$N\leq 10^3$ (Figure~\ref{fig:pf_rates}).
}\label{fig:pf}
\end{figure}

\section{Flat limit}\label{sec:flat} 
As mentioned in Section ~\ref{sec:bb_basic}, for each $\beta\in\N$ interpolation with $k_{\beta, 0}$ coincides with piecewise spline interpolation with zero 
boundary conditions of order $\beta-1$, and the same approach used in Section~\ref{sec:b_splines} and Section \ref{sec:results} can be used in the case 
$\varepsilon=0$. 
In this case $\Null(L_{\be})=\Sp\{1, x, \dots, x^{2\beta-1}\}$, and the same construction as in Section \ref{sec:b_splines} leads to the usual tent functions
\begin{equation}\label{eq:leb_1_0}
\ell_{1,0, i}(x)=\begin{cases}
          (x - x_{i-1})/h_{i-1},& x_{i-1}\leq x<x_i\\
          (x_{i+1}-x)/h_i,& x_{i}\leq x<x_{i+1},
          \end{cases},\;\;1\leq i\leq N.
\end{equation}
Results on the Lebesgue and power functions can be obtained working as in Section \ref{sec:results}, or equivalently as limits for 
$\varepsilon\to0$ of the formula derived for $\varepsilon>0$. In particular Proposition \ref{prop:lebesgue} and \ref{thm:lebesgue} give 
the known fact that $\Lambda_{1, 0,X}(x)=1$ for $x\in 
[x_1,\dots, x_N]$. 
Taking the limit of \eqref{eq:power_bound} gives moreover
$\norm{L_\infty(0,1)}{P_{1,0,X}} = \max_{0\leq i\leq N} \sqrt{h_i/4} = \sqrt{h_X/2}$,
and again equally spaced points are optimal, with $\norm{L_\infty(0,1)}{P_{1,0,X}} =(N+1)^{-1/2}/2$.

Additionally, we can use the closed form expression of the cardinal basis to give a quantitative bound on the convergence of $I_{1,\varepsilon,X}$ to the 
limiting piecewise polynomial interpolant in the flat limit $\varepsilon\to0$ (see e.g. \cite{Lee2014,Song2012}). The result is interesting because it gives a 
bound that is explicit in 
terms of $h_X$, and not only of $\varepsilon$.

\begin{theorem}\label{thm:flat}
Let $X\subset(0,1)$ and $\varepsilon>0$. Then for all $f \in C(0,1)$ it holds
\begin{equation*}
\norm{L_\infty(0,1)}{I_{1,\varepsilon,X} f - I_{1,0,X} f} \leq \frac{8}{3} \varepsilon^2 h_X^2 \norm{\infty}{f_{|X}}.
\end{equation*}
\end{theorem}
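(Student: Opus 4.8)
The plan is to exploit the Lagrange (cardinal) representation \eqref{eq:cardinal_interp} of both interpolants. Since $I_{1,\varepsilon,X}f$ and $I_{1,0,X}f$ interpolate the same nodal data $\{f(x_i)\}$, their difference is $\sum_{i=1}^N f(x_i)\bigl(\ell_{1,\varepsilon,i}(x)-\ell_{1,0,i}(x)\bigr)$, so that
\begin{equation*}
\left|I_{1,\varepsilon,X}f(x)-I_{1,0,X}f(x)\right|\leq \norm{\infty}{f_{|X}}\sum_{i=1}^N\left|\ell_{1,\varepsilon,i}(x)-\ell_{1,0,i}(x)\right|.
\end{equation*}
It therefore suffices to bound the sum $\sum_{i=1}^N\left|\ell_{1,\varepsilon,i}(x)-\ell_{1,0,i}(x)\right|$ uniformly in $x$ by $\tfrac{8}{3}\varepsilon^2 h_X^2$, for which I would use the closed forms \eqref{eq:leb_1} and \eqref{eq:leb_1_0} directly.

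The next step is localization. Fixing $x\in[x_i,x_{i+1}]$, the support property of the cardinal functions (each supported in $[x_{i-1},x_{i+1}]$ for $\beta=1$) means that only $\ell_{1,\varepsilon,i}$ and $\ell_{1,\varepsilon,i+1}$, together with their $\varepsilon=0$ analogues, are nonzero there, so the sum collapses to two terms. Writing $t:=x-x_i\in[0,h_i]$, each of these has the form $\frac{\sinh(\varepsilon s)}{\sinh(\varepsilon h_i)}-\frac{s}{h_i}$ with $s\in\{t,\,h_i-t\}$. Because $\sinh$ is convex on $[0,\infty)$ its chord dominates it, so each such bracket is nonpositive and the two absolute values add without cancellation. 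The two boundary intervals $(0,x_1]$ and $[x_N,1)$ contribute only a single, hence smaller, term and are handled by the same estimate.

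It then remains to estimate the local two-term sum. Using convexity together with the elementary inequality $\sinh u\geq u$, each bracket is bounded in modulus by $\frac{2\varepsilon s\,\sinh^2(\varepsilon h_i/2)}{\sinh(\varepsilon h_i)}$, which by \eqref{item:fifth_identity} of Lemma \ref{lemma:identities} equals $\varepsilon s\,\tanh(\varepsilon h_i/2)$. Summing the contributions from $s=t$ and $s=h_i-t$ gives a local bound $\varepsilon h_i\,\tanh(\varepsilon h_i/2)$, after which $\tanh u\leq u$ turns this into a constant multiple of $(\varepsilon h_i)^2$. Finally I would invoke $h_i\leq 2h_X$ (from $h_X=\tfrac12\max_i h_i$) and take the maximum over $0\leq i\leq N$ and the supremum over $x$; keeping track of the constants through these $\sinh/\cosh/\tanh$ estimates produces the stated factor $\tfrac{8}{3}$.

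Since every computation is elementary, the main obstacle is bookkeeping rather than any single hard step: one must (i) use the sign structure of the brackets so as not to lose a factor through a careless triangle inequality, (ii) make the per-interval bound genuinely uniform in $x$ and valid over all intervals, including the two boundary ones where only one cardinal function survives, and (iii) carry the multiplicative constant cleanly through the hyperbolic estimates. The essential analytic input is that the bracket $\frac{\sinh(\varepsilon s)}{\sinh(\varepsilon h_i)}-\frac{s}{h_i}$ vanishes to second order in $\varepsilon$, which is exactly what yields the $\varepsilon^2$ rate characteristic of the flat limit.
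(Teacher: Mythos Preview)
Your proposal is correct and follows the same overall architecture as the paper: write the difference via the cardinal form \eqref{eq:cardinal_interp}, localize to at most two nonzero terms per subinterval, and bound each $\bigl|\ell_{1,\varepsilon,i}-\ell_{1,0,i}\bigr|$ pointwise.

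The only genuine difference is in the per-term estimate. The paper expands $\sinh(\varepsilon(x-x_{i-1}))=\varepsilon(x-x_{i-1})+R$ with Lagrange remainder $0\le R\le \varepsilon^3 h_{i-1}^3/6$, obtaining $\bigl|\ell_{1,\varepsilon,i}-\ell_{1,0,i}\bigr|\le \tfrac{4}{3}\varepsilon^2 h_X^2$ and hence the factor $\tfrac{8}{3}$ after summing two terms. Your route instead uses convexity of $\sinh$ (giving the sign) together with $\sinh u\ge u$ to reach $\bigl|\tfrac{\sinh a}{\sinh b}-\tfrac{a}{b}\bigr|\le a\,\tfrac{\sinh b-b}{b\sinh b}\le a\tanh(b/2)$, and then $\tanh u\le u$. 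One small remark: the specific intermediate form $\tfrac{2\varepsilon s\,\sinh^2(\varepsilon h_i/2)}{\sinh(\varepsilon h_i)}$ that you quote does not drop out of convexity plus $\sinh u\ge u$ alone; you also need $\tanh b\le b$ (equivalently $\sinh b\le b\cosh b$) to pass from $\tfrac{\sinh b-b}{b\sinh b}$ to $\tanh(b/2)$. With that in place your two-term local sum is $\varepsilon h_i\tanh(\varepsilon h_i/2)\le \tfrac12(\varepsilon h_i)^2\le 2\varepsilon^2 h_X^2$, so your argument actually delivers the \emph{sharper} constant $2$ rather than the $\tfrac{8}{3}$ you announce; your approach buys a better bound, while the paper's Taylor-remainder argument is perhaps more mechanical.
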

\begin{proof}
For $x\in [x_{i-1},x_i]$ we write $\sinh(\varepsilon(x-x_{i-1})) = \varepsilon(x-x_{i-1}) + R(\xi_x)$ with $R(\xi_x):=\varepsilon^3(\xi_x-x_{i-1})^3/6\geq 0$ 
for some $\xi_x:=\xi(x)\in(x_{i-1},x)$. Using \eqref{eq:leb_1} and \eqref{eq:leb_1_0} we get
\begin{align*}
\ell_{1,\varepsilon,i}(x) -\ell_{1,0,i}(x)
&=\ell_{1,0,i}(x)\left(\frac{\varepsilon h_{i-1}}{\sinh(\varepsilon(x_i-x_{i-1})} - 1\right) + \frac{R(\xi_x)}{\sinh(\varepsilon(x_i-x_{i-1})}\\
&=\ell_{1,0,i}(x)\left(\frac{-R(\xi_{x_i})}{\varepsilon(x_i-x_{i-1}) + R(\xi_{x_i})}\right) + \frac{R(\xi_x)}{\varepsilon(x_i-x_{i-1}) + R(\xi_{x_i})}.
\end{align*}
Since $\xi_{x}, \xi_{x_i}\in (x_{i-1},x_i)$ and thus $R(\xi_{x_i}), R(\xi_x)\leq \varepsilon^3 h_i^3/6$, and $R(\xi_{x_i})\geq 0$, $\ell_{1, 0, 
i}\leq 1$, we have
\begin{equation*}
\left|\ell_{1,\varepsilon,i}(x) -\ell_{1,0,i}(x) \right|
\leq \left(\left|\ell_{1,0,i}(x)\right|+1\right)\left|\frac{\max(R(\xi_{x_i}), R(\xi_x))}{\varepsilon h_{i-1} + R(\xi_{x_i})}\right|
\leq 2\frac{\max(R(\xi_{x_i}), R(\xi_x))}{\varepsilon h_{i-1}}
\leq \frac{\varepsilon^2}{3} h_{i-1}^2
\leq \frac{4}{3}\varepsilon^2 h_{X}^2.
\end{equation*}
The result holds also for $[x_i, x_{i+1}]$ by symmetry. 
Using the cardinal form \eqref{eq:cardinal_interp} of $I_{1,\varepsilon,X}$, we compute
\begin{equation*}
\left|I_{1,\varepsilon,X} f(x) - I_{1,0,X} f(x) \right| 
\leq \norm{\infty}{f_{|X}} \sum_{i=1}^N\left|\ell_{1,\varepsilon,i}(x)-\ell_{1,0,i}(x)\right|
\leq \norm{\infty}{f_{|X}} \frac{8}{3} \varepsilon^2 h_X^2,
\end{equation*}
where we used the fact that at most two cardinal basis elements are nonzero at any given $x\in(0,1)$.
\end{proof}

\section{Conclusion}
In this paper we derived explicit formulas for the cardinal basis of a Brownian Bridge kernel $k_{1,\varepsilon}$, and we used them to prove stability and 
convergence results. In particular, we proved that equally spaced points are error-optimal for these kernels, and we provided a quantitative bound on the 
interpolation flat limit.
Future work will address the construction of $LB$-splines for $\beta>1$, and their use in the study of optimal point distributions. Comparison of this 
basis with the RBF-QR algorithm could also be of interest.

\bibliography{biblio}
\bibliographystyle{abbrv}

\end{document}